\let\oldr@@t\r@@t
\def\r@@t#1#2{%
	\setbox0=\hbox{$\oldr@@t#1{#2\,}$}\dimen0=\ht0
	\advance\dimen0-0.2\ht0
	\setbox2=\hbox{\vrule height\ht0 depth -\dimen0}%
	{\box0\lower0.4pt\box2}}
\LetLtxMacro{\oldsqrt}{\sqrt}
\renewcommand*{\sqrt}[2][\ ]{\oldsqrt[#1]{#2}}
\theoremstyle{definition}
\newtheorem{remark1}{Remark}
\newtheorem{example1}{Example}
\newtheorem{theorem1}{Theorem}
\newtheorem{theorem}{Theorem}[section]
\newtheorem{corollary}[theorem]{Corollary}
\newtheorem{proposition}[theorem]{Proposition}
\numberwithin{equation}{section} % requires amsmath
\def\@seccntformat#1{\@ifundefined{#1@cntformat}%
	{\csname the#1\endcsname\quad}%      default
	{\csname #1@cntformat\endcsname}%    enable individual control
}
\newif\ifShowComments
\def\strutdepth{\dp\strutbox}
\def\druk#1{\strut\vadjust{\kern-\strutdepth
        {\vtop to \strutdepth{%
                \baselineskip\strutdepth\vss
                        \llap{\hbox{#1}\quad}\null}}}}
\title{\bf
%Distribution of the modified ratio of independent random variables via integral transforms
%
%Characterizations of
On the distribution of a random variable involved in an independent ratio
%: the independent case
%
%A  closed-form and an upper bound for the moments of truncated distributions
}
\author[1, 2]{Roberto Vila \thanks{rovig161@gmail.com}}
\author[2]{Narayanaswamy Balakrishnan  \thanks{bala@mcmaster.ca}}
\author[3]{Marcelo Bourguignon \thanks{m.p.bourguignon@gmail.com}}
\affil[1]{Department of Statistics, University of
	Bras\'ilia, Bras\'ilia, Brazil}
\affil[2]{
	Department of Mathematics and Statistics, McMaster University, Hamilton, Ontario, Canada}
\affil[3]{
	Department of Statistics, Federal University of Rio Grande do Norte, Natal, RN, Brazil}
\begin{document}
\maketitle
\smallskip 
\begin{abstract}
%In this paper, we obtain some characterizations of the law of a random variable $X$ that composes a type I ratio $X/(X+Y)$ provided that $Z$ and %$X/(X+Y)$ are equal in distribution, $X$ and $Y$ are independent, and that $Z$ and $Y$ have known distributions. As applications of the obtained %results, a variety of examples are presented.
In this paper, using inverse integral transforms,
we derive the exact distribution of the
random variable $X$ that is involved in
the ratio $Z \stackrel{d}{=} X/(X+Y)$ where $X$ and $Y$ are  independent random variables having the same
support, and $Z$ and $Y$ have known distributions. We introduce new distributions this way. As applications of the obtained
results, several examples are presented.
\end{abstract}
\bigskip
\noindent
{\small {\bfseries Keywords.} {Laplace transform $\cdot$ Inverse Laplace transform $\cdot$ Generalized Stieltjes transform $\cdot$ Inverse generalized Stieltjes transform $\cdot$ Type I ratio.}}
\\
{\small{\bfseries Mathematics Subject Classification (2010).} {MSC 60E05 $\cdot$ MSC 62Exx $\cdot$ MSC 62Fxx.}}

%\tableofcontents

\vspace*{0.75cm}
\section{Introduction}
\noindent

Let $X, Y$ and $Z$ be (absolutely) continuous and positive random variables
such that
\begin{align}\label{main-id}
Z\stackrel{d}{=}\dfrac{X}{X+Y},
\end{align}
being $\stackrel{d}{=}$ equality in distribution, and the ratio in \eqref{main-id} has support in the unit interval $(0, 1)$. The random variable $Z = X/(X+Y)$ is known in the literature as type I ratio \citep{Johnson95,Bekker2009}.

\newpage
The distributions of ratios of random variables are of interest in many fields \citep{Nadarajah2006}.
An important recent example of ratios of random variables
is in the case fatality rate of Covid-19 \citep{bour22},  where $X \in \mathbb{R}^+$ and
$Y \in \mathbb{R}^+$ are two random variables representing the number of confirmed
Covid-19-related deaths and Covid-19 cases with no death result, respectively. The sum $X + Y$ represents the number
of confirmed Covid-19 cases. It should be noted that stochastic representations are important since they
may justify some models arising naturally in real situations, as described above.

The distribution of $Z$ has been studied by several authors especially when $X$ and $Y$ are independent
random variables and come from the same family of distributions. To the best of our knowledge, there are no previous works when $X$ and $Y$  belong to different families.
\cite{malik:67} and \cite{ahuja:69} both 
discussed the case when
$X$ and $Y$ are independent random variables following
gamma distributions with shape parameters $\alpha > 0$ and $\beta > 0$, and same scale parameter $\theta$.
In a similar way, if $X \sim \mathrm{Gamma}(\alpha, \theta_1)$
and $Y \sim \mathrm{Gamma}(\beta, \theta_2)$ are independent gamma variables, then 
$Z$ is distributed according to a Libby-Novick distribution \citep{LN82}.
For a recent discussion of some extensions of this idea, one may refer to \cite{Jones2021}.
\cite{Lijoi05} considered the ratio using inverse Gaussian random variables instead of gamma random variables, and termed it as normalized inverse Gaussian
distribution. Specifically, the
normalized inverse Gaussian distribution is obtained by the stochastic representation (\ref{main-id}) with $X$
and $Y$ being independent inverse Gaussian random variables with scale parameter 1 and
shape parameters $\alpha > 0$ and $\beta > 0$.

Recently, new families of distributions have been introduced for modeling bounded quantities.
Some of the bounded distributions in the literature are derived from standard distributions by
mathematical transformation like $Z = \exp(-W)$, $Z = W/(1-W)$ or $Z = 1/(1-W)$, where
$W \in \mathbb{R}^+$. For example, the following transformation gives rise to distributions
on the unit interval: if $Z = \textrm{e}^{-W}$, where $W \sim \textrm{Exponentiated-Exponential}(\alpha, \beta)$,
$W \sim \textrm{Gamma}(\alpha, \beta)$ and
$W \sim \textrm{Lindley}(\alpha, \beta)$, implies $Z \sim \textrm{Kumaraswamy}(\alpha, \beta)$ \citep{kuma80},
$Z \sim \textrm{Unit-Gamma}(\alpha, \beta)$ \citep{Grassia77} and
$Z \sim \textrm{Log-Lindley}(\alpha, \beta)$ \citep{deniz14}, respectively, where $\alpha, \beta > 0$.
As far as we know, the Kumaraswamy, Unit-Gamma and Log-Lindley distributions among others do not have
a stochastic representation as in \eqref{main-id}.

The aim of this paper is to propose an easy way of deriving the exact pdf of $X$ that is
involved in the ratio $Z = X/(X+Y)$ when $X$ and $Y$ are independent random variables.  The random variables $X$ and $Y$ do not need to belong to the same family of distributions.
We emphasize here that the techniques used in this work can be slightly modified to determine the distribution of $X$ in the independent ratio $Z=X/Y$, which for sake of conciseness are omitted here.
As the stochastic representation in \eqref{main-id} is important, since it may justify some models arising naturally in certain real situations, we can use the proposed approach to find the stochastic representation in \eqref{main-id} for several models known in the literature.
For example, it allows us to develop an EM-algorithm for estimating the parameters of the distribution of $Z$. We
further propose three new models for bounded data and study them in detail.
The rest of this paper proceeds as follows.
In Section \ref{Main results and examples}, we develop the main results and  study some special cases in detail. Then, some brief closing remarks are made in Section \ref{sec:3}.

\newpage 
\section{Main results and examples}\label{Main results and examples}
\noindent

Suppose 
%\begin{align*}
%\mathbb{P}\left(Z\leqslant z\right)
%=
%\mathbb{P}\left(X(1-z)\leqslant zY\right)
%=
%\mathbb{P}\left(X/Y\leqslant z/(1-z)\right)
%=
%\int_{0}^\infty f_Y(y) F_X([z/(1-z)]y){\rm d}y
%\end{align*}
%$
%\int \mathbb{P}(x/Y\leqslant 1/(1/z-1)) f_X(x)dx
%=
%\int \mathbb{P}(Y\geqslant  x(1/z-1)) f_X(x)dx
%$
%
$X$ and $Y$ are independent random variables, and that
$Y$ and $Z$ have known distributions such that the probability density function (PDF) of $Y$ admits the following decomposition:
	\begin{align}\label{decomp-pdf-Y}
f_Y(sx)
=
\mathbbm{A}(s) \mathbbm{B}(x) \mathbbm{C}(sx), \quad x, s>0,
\end{align}
where $\mathbbm{A}, \mathbbm{B}$ and $\mathbbm{C}$ are some positive-real functions.
Then, the main problem addressed here is in developing
mathematical tools for finding the distribution
of $X$ for a wide class of distributions.

\begin{theorem1}\label{tehorem-main-1-6}
	Under the conditions \eqref{main-id} and \eqref{decomp-pdf-Y}, if
	\begin{align}\label{decomp-pdf-Y-0-6}
		\mathbbm{C}(x)=\exp(-\lambda x^\theta), \quad x>0, \ \lambda,\theta >0,
	\end{align}
	the density of $X$ is given by
	\begin{align*}
		f_X(x)
		=
		\dfrac{\lambda\theta}{x^{2-\theta} \mathbbm{B}(x)}\,
		\mathcal{L}^{-1}\left\{	{1\over \mathbbm{A}(s^{1/\theta}) (s^{1/\theta}+1)^2}\,
		f_Z\left({1\over s^{1/\theta}+1}\right)\right\}(\lambda x^\theta),
	\end{align*}
	where $\mathcal{L}^{-1}$ is the inverse Laplace transform.
\end{theorem1}

From here on, in all the examples to follow,
we suppose that $X,Y$ and $Z$ are related through \eqref{main-id}, and that $X$ and $Y$ are independent.

\begin{example1}
 When $Y\sim\exp(\lambda)$, $\lambda>0$, and $Z\sim {\rm Kumaraswamy}(a,b)$, $a>0, b=1,2,\ldots$, that is, $Z$ has PDF given by
	\begin{align}\label{Kumaraswamy}
	{\displaystyle
	f_Z(z)
	=
	abz^{a-1}{(1-z^{a})}^{b-1},
	\quad 0<z<1.
	}
	\end{align}
	
%	In what follows we find the distribution of $X$.

Indeed, since $f_Y(sx)=\mathbbm{A}(s) \mathbbm{B}(x) \mathbbm{C}(sx)$, with
\begin{align*}
	\mathbbm{A}(s)=1,
	\quad
	\mathbbm{B}(x)={\lambda}\,
	\quad
	\text{and}
	\quad
	\mathbbm{C}(sx)=\exp(-\lambda sx),
\end{align*}
from Theorem \ref{tehorem-main-1-6} (with $\theta=1$), we readily have 
\begin{align}\label{last-ex}
	f_X(x)
	=
	{1\over x} \,
	\mathcal{L}^{-1}\left\{{1\over (s+1)^2}\, f_Z\left({1\over s+1}\right)\right\}(\lambda x).
\end{align}

On the other hand, a binomial expansion provides
$
f_Z(z)
=
ab
\sum_{k=0}^{b-1} \binom{b-1}{k} (-1)^k z^{a(k+1)-1}.
$
Using this expansion and the linearity of $\mathcal{L}^{-1}$, we can write \eqref{last-ex} as  
\begin{align}\label{rhs-fX}
	f_X(x)
	=
	{ab}\,
	{1\over x}
	\sum_{k=0}^{b-1}\binom{b-1}{k} (-1)^k
	\mathcal{L}^{-1}\left\{\left({1\over s +1}\right)^{a(k+1)+1}\right\}(\lambda x).
\end{align}
Now, by employing the
%formula:
%$\mathcal{L}^{-1}\{(s+1)^{-b}\}(t)=t^{b-1} \exp(-t)/ \Gamma(b)$
%%$\mathcal{L}^{-1}\{\Gamma(\nu)(s+a)^{-\nu}\}(t)=t^{-\nu-1}\exp(-at)$,
%%  $\nu>0$,
%(see Proposition \ref{prop-app-1})
%
well-known formula \cite[see][]{Erdelyi1954b}:
\begin{align}\label{usual-identity}
\mathcal{L}^{-1}\{(\alpha s+\beta)^{-p}\}(t)
=
\dfrac{1}{\alpha\Gamma(p)}\,
\left({t\over \alpha}\right)^{p-1}
\exp\left(-{\beta t\over \alpha}\right), \quad p>0,
\end{align}
the right-hand side of \eqref{rhs-fX} is	
\begin{align*}	
f_X(x)	&
	=
	{ab}\,
	{1\over x}
	\sum_{k=0}^{b-1}\binom{b-1}{k} (-1)^k \,
	{(\lambda x)^{a(k+1)}\over \Gamma(1 + a(k+1))} \, \exp(-\lambda x).
\end{align*}		
Hence, the density of $X$ in this case is given by	
\begin{align*}		
	f_X(x)
	&=
	{ab\lambda^{a(k+1)}}
	\sum_{k=0}^{b-1}\binom{b-1}{k} (-1)^k \,
	{x^{a(k+1)-1}\over \Gamma(1 + a(k+1))} \, \exp(-\lambda x).
	%	
	%	{1\over \Gamma(a(k+1)+1)}\, (\lambda x)^{a(k+1)} \exp(-\lambda x)
	%		\\[0,2cm]
	%	&=
	%	ab
	%	\sum_{k=0}^{b-1}\binom{b-1}{k} (-1)^k \,
	%	{\lambda^{a(k+1)}\over \Gamma(a(k+1)+1)}\, x^{a(k+1)-1} \exp(-\lambda x)
	\\[0,2cm]
	&=
\sum_{k=0}^{b-1}\binom{b-1}{k} (-1)^k\,
{1\over k+1}\, f_{T_k}(x), \quad x>0,
\end{align*}
	where $T_k\sim{\rm Gamma}(a(k+1),\lambda)$. Thus, the PDF of $X$ is a finite sum of weighted gamma distributions.
%
%
%Therefore,
%\begin{align*}
%	f_X(x)
%	&=
%	ab
%	\sum_{k=0}^{b-1}\binom{b-1}{k} (-1)^k \,
%	{\lambda^{a(k+1)}\over \Gamma(a(k+1)+1)}\, x^{a(k+1)-1} \exp(-\lambda x)
%	\\[0,2cm]
%	&=
%	ab
%	\sum_{k=0}^{b-1}\binom{b-1}{k} (-1)^k\,
%	{\Gamma(a(k+1))\over \Gamma(a(k+1)+1)}\, f_{T_k}(x)
%	\\[0,2cm]
%	&=
%	b
%	\sum_{k=0}^{b-1}\binom{b-1}{k} (-1)^k\,
%	{1\over (k+1)}\, f_{T_k}(x),
%\end{align*}
%where $T_k\sim{\rm Gamma}(a(k+1),\lambda)$.

%
%\begin{align*}
%\int_0^\infty
%	f_X(x)
%	{\rm d}x
%=
%b
%\sum_{k=0}^{b-1}\binom{b-1}{k} (-1)^k
%{1\over k+1}
%=
%1.
%\end{align*}
\end{example1}

%\begin{example1}
%Let $Y\sim{\rm Gamma}(b,1)$ and $Z\sim{\rm Beta}(a,b)$, $a,b>0$.
%
%In what follows we find the distribution of $X$.
%
%Indeed, since $f_Y(sx)=\mathbbm{A}(s) \mathbbm{B}(x) \mathbbm{C}(sx)$, with
%\begin{align*}
%\mathbbm{A}(s)= s^{b-1},
%\quad
%\mathbbm{B}(x)={1\over \Gamma(b)}\, x^{b-1}
%\quad
%\text{and}
%\quad
%\mathbbm{C}(sx)=\exp(-sx),
%\end{align*}
%from Theorem \ref{tehorem-main-1-6} (with $\lambda=\theta=1$) we have
%\begin{align*}
%f_X(x)
%&=
%\Gamma(b)\,
%{1\over  x^b}\,
%\mathcal{L}^{-1}\left\{{1\over s^{b-1}(s+1)^2}\, f_Z\left({1\over s+1}\right)\right\}(x)
%\\[0,2cm]
%&=
%{\Gamma(b)\over B(a,b)}\,
%{1\over  x^b}\,
%\mathcal{L}^{-1}\left\{(s+1)^{-(a+b)}\right\}(x)
%=
%{1\over \Gamma(a)}\, x^{a-1} \exp(-x),
%\end{align*}
%where in the last equality we used the well-known formula \cite[see][]{Erdelyi1954b}:
%\begin{align}\label{usual-identity}
%\mathcal{L}^{-1}\{(\alpha s+\beta)^{-p}\}(t)
%=
%\dfrac{1}{\alpha\Gamma(p)}\,
%\left({t\over \alpha}\right)^{p-1}
%\exp\left(-{\beta t\over \alpha}\right), \quad p>0.
%\end{align}
%In other words, $X\sim{\rm Gamma}(a,1)$. This confirms a well-known result in the literature with respect to type I radio involving independent beta variables.
%\end{example1}

\begin{example1}\label{bbeta}
When $Y\sim{\rm Gamma}(\beta,\lambda)$, $\lambda>0$, and $Z\sim{\rm Bbeta}(\alpha,\beta, \rho, \delta)$, $\alpha,\beta>0$, $\rho\geqslant 0$ and $\delta\in\mathbb{R}$, is a random variable following the bimodal  beta (Bbeta) distribution \cite[see][]{Vila:22} with density
\begin{align*}
    f_Z(z)
=
\displaystyle
\frac{\rho+(1-\delta{x})^2}{K {B}(\alpha,\beta) } \,
x^{\alpha-1} \, (1-x)^{\beta-1},
\quad 0 < z < 1,
\end{align*}
where
$
K
=
1+\rho
- 2\delta{\alpha/(\alpha+\beta)}
+
\delta^2{\alpha(\alpha+1)/[(\alpha+\beta)(\alpha+\beta+1)]}.
$
%
%In what follows we find the distribution of $X$.
%
Indeed, since $f_Y(sx)=\mathbbm{A}(s) \mathbbm{B}(x) \mathbbm{C}(sx)$, with
\begin{align*}
\mathbbm{A}(s)=s^{\beta-1},
\quad
\mathbbm{B}(x)={\lambda^\beta\over \Gamma(\beta)}\, x^{\beta-1}
\quad
\text{and}
\quad
\mathbbm{C}(sx)=\exp(-\lambda sx),
\end{align*}
from Theorem \ref{tehorem-main-1-6} (with $\theta=1$), we readily have
	\begin{align}\label{applic-theorem}
f_X(x)
=
\dfrac{\Gamma(\beta)}{\lambda^{\beta-1} x^{\beta}}\,
\mathcal{L}^{-1}\left\{	{1\over s^{\beta-1} (s+1)^2}\,
f_Z\left({1\over s+1}\right)\right\}(\lambda x),
\end{align}
where
\begin{align}\label{dec-frac}
{1\over s^{\beta-1} (s+1)^2}\,
f_Z\left({1\over s+1}\right)
=
\sum_{k=0}^{2}
{\pi_k\over\Gamma(\alpha+\beta+k)}\, {(s+1)^{-(\alpha+\beta+k)}},
\end{align}
$
\pi_0={(1+\rho)/K},
\pi_1=-{2\alpha \delta/[(\alpha+\beta) K]}
$
and
$
\pi_2={\alpha(\alpha+1)\delta^2/[(\alpha+\beta)(\alpha+\beta+1) K]}.
$
Note that $\pi_0+\pi_1+\pi_2=1$. Using \eqref{dec-frac} in \eqref{applic-theorem},
%, from identity \eqref{usual-identity}
we obtain
\begin{align*}
f_X(x)
&=
\dfrac{\Gamma(\beta)}{\lambda^{\beta-1} x^{\beta}}\,
\sum_{k=0}^{2}
{\pi_k\over B(\alpha+k,\beta)}\,
\mathcal{L}^{-1}\left\{ {(s+1)^{-(\alpha+\beta+k)}}\right\}(\lambda x)
\\[0,2cm]
&=
\sum_{k=0}^{2}
{\pi_k}\,
{\lambda^{\alpha+k}\over\Gamma(\alpha+k)}\,
x^{\alpha+k-1}
\exp(-\lambda x),
\end{align*}
where, in the last equality, we have used \eqref{usual-identity}.
Thus, the PDF of $X$ in this case can be written as a finite (generalized) mixture of three Gamma distributions with different shape parameters.
\end{example1}

\begin{remark1}
	Notice that the PDF of $Z$ in Example \ref{bbeta} can be written as
	\begin{align*}
	f_Z(z)=\pi_0 f_{Z_0}(z)+\pi_1 f_{Z_1}(z)+\pi_2 f_{Z_2}(z),
	\end{align*}
	where $Z_k\sim{\rm Beta}(\alpha+k,\beta)$, $k=0,1,2$, and $\pi_0, \pi_1, \pi_2$ being as defined in Example \ref{bbeta}.
\end{remark1}

Upon setting $\delta=0$ in Example \ref{bbeta}, the following result well-known in the literature is deduced.
\begin{example1}[\citet{malik:67,ahuja:69,Jones2021}]
		If $Y\sim{\rm Gamma}(\beta,\lambda)$, $\lambda>0$, and $Z\sim{\rm Beta}(\alpha,\beta)$, $\alpha,\beta>0$,
		then $X\sim{\rm Gamma}(\alpha,\lambda)$.
\end{example1}

\begin{example1}
	When $Y\sim{\rm Gamma}(\beta,\lambda)$, $\beta,\lambda>0$, and $Z$ is a random variable following the Topp-Leone distribution 
%\cite[][p. 317]{Nadarajah2003}, i.e.,
	with density
	\begin{align*}
	f_Z(z)
	=
	2vz^{v-1}(1-z)(2-z)^{v-1},
	\quad 0 < z < 1, \ v=1,2,\ldots.
	\end{align*}
%	
%		In what follows we find the distribution of $X$.
	Indeed, by taking 	$\mathbbm{A}(s),
	\mathbbm{B}(x)$ and
	$\mathbbm{C}(sx)$ as in Example \ref{bbeta}, from Theorem \ref{tehorem-main-1-6} (with $\theta=1$), we have the validity of the identity in \eqref{applic-theorem}, with $Z$ following the Topp-Leone distribution. Note that this identity can be expressed equivalently as
	\begin{align*}
f_X(x)
=
\dfrac{2v\Gamma(\beta)}{\lambda^{\beta-1} x^{\beta}}\,
\mathcal{L}^{-1}\left\{	{(2s+1)^{v-1}\over s^{\beta-2} (s+1)^{2v+1}}\right\}(\lambda x).
\end{align*}
Using a  binomial expansion, the expression on the right hand side becomes
	\begin{multline*}
\dfrac{2v\Gamma(\beta)}{\lambda^{\beta-1} x^{\beta}}
\sum_{k=0}^{v-1}
\binom{v-1}{k}
2^k
\mathcal{L}^{-1}\left\{	{1\over s^{\beta-k-2} (s+1)^{2v+1}}\right\}(\lambda x)
\\[0,2cm]
=
{2v\Gamma(\beta)}
\sum_{k=0}^{v-1}
\binom{v-1}{k}
2^k \lambda^{2v-k-1} x^{2v-k-2} \, {_1F_1(2v+1;\beta+2v-k-1;-\lambda x)\over\Gamma(\beta+2v-k-1)},
	\end{multline*}
	where, in the last line, Proposition \ref{prop-app-1} has been used. Thus, the PDF of $X$ is a finite sum in this case of weighted  distributions that include confluent hypergeometric functions.
\end{example1}

	\begin{example1}\label{weighted Lindley}
	When $Y\sim\exp(\lambda)$, $\lambda>0$, and $Z$ is a random variable having the density
\begin{align*}
f_Z(z)
=
{\lambda\beta^{c+1}\over (\beta+c)\Gamma(c)}\, {1\over z^2}
\left\{
\Gamma(c+1) \left[\lambda \left({1\over z}-1\right)+\beta \right]^{-(c+1)}
+
\Gamma(c+2) \left[\lambda \left({1\over z}-1\right)+\beta \right]^{-(c+2)}
\right\},
\end{align*}
where
%$s=(1/ z)-1$,
$0<z<1$ and $c,\beta>0$.
%
%In what follows we find the distribution of $X$.
%
Indeed, since $f_Y(sx)=\mathbbm{A}(s) \mathbbm{B}(x) \mathbbm{C}(sx)$, with
\begin{align*}
\mathbbm{A}(s)=1,
\quad
\mathbbm{B}(x)={\lambda}
\quad
\text{and}
\quad
\mathbbm{C}(sx)=\exp(-\lambda sx),
\end{align*}
from Theorem \ref{tehorem-main-1-6} (with $\theta=1$), we readily have
\begin{align*}
	f_X(x)
	&=
	{1\over  x}\,
	\mathcal{L}^{-1}\left\{{1\over (s+1)^2}\, f_Z\left({1\over s+1}\right)\right\}(\lambda x)
	\\[0,2cm]
	&=
	{\lambda\beta^{c+1}\over (\beta+c)\Gamma(c)} \,
	{1\over  x}\,
	\mathcal{L}^{-1}\left\{
	\Gamma(c+1)
(\lambda s+\beta)^{-(c+1)}
+
\Gamma(c+2) (\lambda s+\beta)^{-(c+2)}
	\right\}(\lambda x).
\end{align*}
Upon using \eqref{usual-identity},  the last equation becomes
\begin{align*}
f_X(x) =
{\beta^{c+1}\over (\beta+c)\Gamma(c)}\, x^{c-1}(1+x)\exp(-\beta x), \quad x>0;
\end{align*}
that is, $X$ is a random variable having the weighted Lindley distribution \cite[][]{Ghi:11}.
\end{example1}

\begin{remark1}
Note that the density of $Z$ in Example \ref{weighted Lindley} can be expressed as
\begin{align*}
f_Z(z)=pf_{T_0}(z)+(1-p)f_{T_1}(z),
\end{align*}
where $p=\beta/(\beta+c)$ and
% $f_{T_j}(z)=f_{L_j}\big(a(z);c+j,\theta/\lambda\big) \vert a'(z)\vert$, $j=0,1$,
% with
% $L_j\sim{\rm Lomax}(c+j,\theta/\lambda)$ and $a(z)=(1/z)-1$. I.e.,
 $T_j
% =a^{-1}(L_j)
 =1/(L_j+1)$,
  $L_j\sim{\rm Lomax}(c+j,\beta/\lambda)$,
  $j=0,1$. Further,
  \begin{align*}
  \mathbb{E}(T_j^r)={c+j\over c+j-r+4}\, \left({\beta\over\lambda}\right)^{c+j}\,
  _2F_1\left(c+j+1, c+j-r+4; c+j-r+5; 1-{\beta\over\lambda}\right),
  \end{align*}
  provided $c+j-r+4>0$.
\end{remark1}

%	The next result represents a generalization of Theorem \ref{tehorem-main-1-5}.

\begin{example1}\label{example-GG}
	When $Y\sim{\rm GG}(a_2,d_2,\theta)$, $a_2,d_2,\theta>0$, is a random variable following the generalized gamma (GG) distribution with density
	\begin{align*}
	f_Y(y)
	=
	\dfrac{\theta}{a_2^{d_2}\Gamma\left(\dfrac{d_2}{\theta}\right)}\, y^{d_2-1}\exp\left[-\left({y\over a_2}\right)^\theta\right], \quad y>0,
	\end{align*}
	and let  $Z$ is a random variable with density
	\begin{align}\label{LN}
	{\displaystyle
		f_Z(z)
		=
		\dfrac{\theta a_1^{d_2} a_2^{d_1}}{ B\left(\dfrac{d_1}{\theta},\dfrac{d_2}{\theta}\right)}\,
		\dfrac{z^{d_1-1}(1-z)^{d_2-1}}{\{(a_2 z)^\theta+[a_1(1-z)]^\theta\}^{(d_1+d_2)/\theta}},
		\ \ {\mbox{where}}\ \ 0<z<1, \ a_1,d_1>0.
	}
	\end{align}
	
	%In what follows we find the distribution of $X$.
	
	Indeed, since $f_Y(sx)=\mathbbm{A}(s) \mathbbm{B}(x) \mathbbm{C}(sx)$, with
	\begin{align*}
	\mathbbm{A}(s)=s^{d_2-1},
	\quad
	\mathbbm{B}(x)={\theta\over a_2^{d_2}\Gamma\left(\dfrac{d_2}{\theta}\right)}\,  x^{d_2-1}
	\quad
	\text{and}
	\quad
	\mathbbm{C}(sx)=\exp\left[- \left({s x\over a_2}\right)^\theta\right],
	\end{align*}
	from Theorem \ref{tehorem-main-1-6} (with $\lambda=a_2^{-\theta}$), we readily have 
	\begin{align*}
	f_X(x)
	=
	\dfrac{\lambda\theta}{x^{2-\theta} \mathbbm{B}(x)}\,
	\mathcal{L}^{-1}\left\{	{1\over \mathbbm{A}(s^{1/\theta}) (s^{1/\theta}+1)^2}\,
	f_Z\left({1\over s^{1/\theta}+1}\right)\right\}(\lambda x^\theta).
	\end{align*}
	But, $x^{2-\theta} \mathbbm{B}(x)=
	\theta x^{d_2-\theta+1}/[a_2^{d_2}\Gamma(d_2/\theta)]$ and
	\begin{align*}
	{1\over \mathbbm{A}(s^{1/\theta}) (s^{1/\theta}+1)^2}\,
	f_Z\left({1\over s^{1/\theta}+1}\right)
	=
		{\theta  a_2^{d_1}\over a_1^{d_1} B\left(\dfrac{d_1}{\theta},\dfrac{d_2}{\theta}\right)}\,
\left[\left(\dfrac{a_2}{a_1}\right)^\theta+s\right]^{-(d_1+d_2)/\theta}.
	\end{align*}
Then, by using \eqref{usual-identity}, we find
	\begin{align*}
	f_X(x)
	&=
	\dfrac{\lambda\theta a_2^{d_1+d_2}\Gamma\left(\dfrac{d_2}{\theta}\right)}{ a_1^{d_1} B\left(\dfrac{d_1}{\theta},\dfrac{d_2}{\theta}\right) }\,
	{1\over x^{d_2-\theta+1}}\,
	\mathcal{L}^{-1}\left\{
\left[\left(\dfrac{a_2}{a_1}\right)^\theta+s\right]^{-(d_1+d_2)/\theta}
	\right\}(\lambda x^\theta)
	\\[0,2cm]
	&=
	\dfrac{\theta}{a_1^{d_1} \Gamma\left(\dfrac{d_1}{\theta}\right)}\,
		x^{d_1-1}\,
	\exp\left[- \left(\dfrac{x}{a_1}\right)^\theta\right], \quad x>0;
	\end{align*}
	that is, $X\sim{\rm GG}(a_1,d_1,\theta)$. Note that, by taking $\theta=1$, the PDF of $Z$ in \eqref{LN} becomes the Libby-Novick distribution \citep[see][]{Ahmed21, LN82}. Also, by taking $d_1=d_2=\theta$, $a_1=1$ and $a_2=\beta$ in Example \ref{example-GG}, we get the following example.
\end{example1}

\begin{example1}
	When $Y\sim{\rm Weibull}(\theta,\beta)$, $\theta,\beta>0$, and $Z\sim {\rm UW2}(\theta,\beta)$ is a random variable following a unitary Weibull distribution Type 2 (UW2) \citep[see][]{Reyes23} with density
	\begin{align*}
	{\displaystyle
		f_Z(z)
		=
		\dfrac{\theta\beta^\theta z^{\theta-1}(1-z)^{\theta-1}}{[(\beta z)^\theta+(1-z)^\theta]^2},
		\quad 0<z<1.
	}
	\end{align*}
	Then, $X\sim{\rm Weibull}(\theta,1)$.
%
%	In what follows we find the distribution of $X$.
%	
%	Indeed, since $f_Y(sx)=A(s) B(x) C(sx)$ with $A(s)=s^{\theta-1}$, $B(x)=(\theta/\beta)(x/\beta)^{\theta-1}$ and $C(sx)=\exp(- (s x)^\theta/\beta^\theta)$, from Theorem \ref{tehorem-main-1-6} (with $\lambda=\beta^{-\theta}$) we have
%	\begin{align*}
%	f_X(x)
%	=
%	\dfrac{\lambda\theta}{x^{2-\theta} B(x)}\,
%	\mathcal{L}^{-1}\left\{	{1\over A(s^{1/\theta}) (s^{1/\theta}+1)^2}\,
%	f_Z\left({1\over s^{1/\theta}+1}\right)\right\}(\lambda x^\theta).
%	\end{align*}
%	But $x^{2-\theta} B(x)=(\theta/\beta^{\theta}) x$ and
%	\begin{align*}
%	{1\over A(s^{1/\theta}) (s^{1/\theta}+1)^2}\,
%	f_Z\left({1\over s^{1/\theta}+1}\right)
%	=
%	{\theta\beta^\theta
%		\over (\beta^\theta+s)^2}.
%	\end{align*}
%	Furthermore, it is well-known that $\mathcal{L}^{-1}\{(a+s)^{-p}\}(x)=x^{p-1}\exp(-a x)/\Gamma(p)$, $p>0$. Then
%	\begin{align*}
%	f_X(x)
%	=
%	\dfrac{1}{x}\,
%	\mathcal{L}^{-1}\left\{{\theta\beta^\theta
%		\over (\beta^\theta+s)^2}\right\}(\lambda x^\theta)
%	=
%	\theta x^{\theta-1} \exp(-x^\theta), \quad x>0.
%	\end{align*}
%	In other words, $X\sim{\rm Weibull}(\theta,1)$. This confirms the known result in \cite{Reyes23}.
\end{example1}

\begin{theorem1}\label{tehorem-main-1-7}
	Under the conditions in \eqref{main-id} and \eqref{decomp-pdf-Y}, if
	\begin{align}\label{decomp-pdf-Y-0-7}
	\mathbbm{C}(x)=(p+qx)\exp(-\lambda x), \quad x>0, \ q,\lambda>0, p\geqslant 0,
	\end{align}
	the density of $X$ is given by
\begin{align*}
	f_X(x)
	=
	{\lambda^3\over qx^{(\lambda p/ q)+2} \mathbbm{B}(x)}
\,
	\int_0^x
	\xi^{\lambda p/ q}
	\mathcal{L}^{-1}\left\{
	{1\over \mathbbm{A}\left(\dfrac{s}{\lambda}\right) (s+\lambda)^2}\,
	f_Z\left({\lambda\over s+\lambda}\right)
	\right\}(\xi)
	{\rm d}\xi,
\end{align*}
whenever $[x^{(\lambda p/ q)+2} \mathbbm{B}(x) f_X(x)]\vert_{x=0^+}=0$.
\end{theorem1}

\begin{example1}\label{weighted Lindley distribution}
	When $Y$ is a random variable following the weighted Lindley distribution \cite[see][]{Ghi:11} with parameter vector $(b,1)$ and density
	\begin{align*}
	f_Y(y)
	&=
	{1\over (1+b)\Gamma(b)}\, y^{b-1}(1+y)\exp(-y), \quad y>0, \ b>0,
	\end{align*}
and $Z$ is a random variable with density
\begin{align}\label{misture-betas}
f_Z(z)
=
	{(a+b+1)\over (1+a)(1+b) B(a,b)}\,
	{z^{a-1}}  \left({1-z}\right)^{b-1}
	\left[(a+b)\, z{(1-z)}+ {1}\right], 	\quad 0<z<1, \ a>0.
\end{align}

%In what follows we find the distribution of $X$.

	Indeed, since $f_Y(sx)=\mathbbm{A}(s) \mathbbm{B}(x) \mathbbm{C}(sx)$, with
	\begin{align*}
	\mathbbm{A}(s)=s^{b-1},
	\quad
	\mathbbm{B}(x)={1\over (1+b)\Gamma(b)}\, x^{b-1}
	\quad
	\text{and}
	\quad
	\mathbbm{C}(sx)=(1+sx)\exp(-sx),
	\end{align*}
	from Theorem \ref{tehorem-main-1-7} (with $p=q=\lambda=1$), we get	
\begin{align}\label{eq:1}
f_X(x)
=
{1\over x^{3} \mathbbm{B}(x)}
\int_0^x
\xi
\mathcal{L}^{-1}\left\{
{1\over \mathbbm{A}(s) (s+1)^2}\,
f_Z\left({1\over s+1}\right)
\right\}(\xi)
{\rm d}\xi,
\end{align}
provided $[x^3 \mathbbm{B}(x) f_X(x)]\vert_{x=0^+}=0$.
But, $x^{3} \mathbbm{B}(x)=x^{b+2}/[(1+b)\Gamma(b)]$ and
\begin{align*}
{1\over \mathbbm{A}(s) (s+1)^2}\,
f_Z\left({1\over s+1}\right)
&=
{(a+b+1)\over (1+a)(1+b) B(a,b)}
\left[
{1\over (s+1)^{a+b}}
+
{(a+b)} \,
{s\over (s+1)^{a+b+2}}
\right]
\\[0,2cm]
&=
{(a+b+1)\over (1+a)(1+b) B(a,b)}
\left[{1 \over (s+1)^{a+b}}+(a+b)s\mathcal{L}\left\{{\xi^{a+b+1}\exp(-\xi)\over\Gamma(a+b+2)}\right\}(s) \right],
\end{align*}
where, in the last line, we have used \eqref{usual-identity}. Now, by applying the inverse Laplace transform on both sides of the above equality and using \eqref{usual-identity} together with property \eqref{property-inv-Laplace}, we obtain
\begin{multline*}
\mathcal{L}^{-1}\left\{
{1\over \mathbbm{A}(s) (s+1)^2}\,
f_Z\left({1\over s+1}\right)
\right\}(\xi)
\\[0,2cm]
=
{(a+b+1)\over (1+a)(1+b) B(a,b)}
\left[
\mathcal{L}^{-1}\left\{(s+1)^{-(a+b)}\right\}(\xi)
+
(a+b)
\mathcal{L}^{-1}\left\{
s\mathcal{L}\left\{{\xi^{a+b+1}\exp(-\xi)\over\Gamma(a+b+2)}\right\}(s)
\right\}(\xi)
\right]
\\[0,2cm]
=
{1\over (1+a)(1+b)\Gamma(a)\Gamma(b)}\,
\xi^{a+b-1}[(a+b+1)+(a+b+1)\xi-\xi^2]\exp(-\xi).
\end{multline*}
Using the above identities in \eqref{eq:1}, we get
\begin{align*}
f_X(x)
&=
{1\over (1+a)\Gamma(a)}\,
{1\over x^{b+2}} \,
\int_0^x
\xi^{a+b}[(a+b+1)+(a+b+1)\xi-\xi^2]\exp(-\xi)
{\rm d}\xi
\\[0,2cm]
&=
{1\over (1+a)\Gamma(a)}\,
x^{a-1}(1+x)\exp(-x),
\end{align*}	
whenever $[x^3 \mathbbm{B}(x) f_X(x)]\vert_{x=0^+}=0$. Note that this condition is equivalent to  $[x^{b+2}f_X(x)]\vert_{x=0^+}=0$, which is satisfied if we consider $f_X$ as in the above equation.
Then, we conclude that $X$ has weighted Lindley distribution with parameter vector $(a,1)$.
\end{example1}	

\begin{remark1}
	A simple observation shows that the density of $Z$ in \eqref{misture-betas} can be written as a mixture of beta distributions as 
	\begin{align*}
	f_Z(z)
	=
	p f_{Z_0}(z)+(1-p) f_{Z_1}(z),
	\end{align*}
	where $p=(a+b+1)/[(a+1)(b+1)]$ and $Z_j\sim {\rm Beta}(a+j,b+j)$, $j=0,1$.
\end{remark1}

%It is not always easy to obtain an explicit inverse kernel $K^{-1}$ in the representation \eqref{identity-main} of the PDF of $X$ (Theorem \ref{main-theorem}). Instead,
%In many situations, it is possible to obtain other representations of \eqref{identity-main} as a function of known transforms. This is reflected in the following result.
%
\begin{theorem1}\label{tehorem-main-2}
	Under the conditions in \eqref{main-id} and \eqref{decomp-pdf-Y}, if
\begin{align}\label{decomp-pdf-Y-1}
\mathbbm{C}(x)=(1+\theta x)^{-p}, \quad x>0,\ \theta, p>0,
\end{align}
the density of $X$ is given by
\begin{align*}
f_X(x)
=
{1\over x \mathbbm{B}(x)}\,
\mathcal{G}_p^{-1}\left\{
{1\over s^{p} \mathbbm{A}\left(\dfrac{1}{\theta s}\right)}
\left({\theta s\over 1+\theta s}\right)^2
f_Z\left({\theta s\over 1+ \theta s}\right)
\right\}(x),
\end{align*}
where $\mathcal{G}_p^{-1}$ is the inverse of the generalized Stieltjes transform $\mathcal{G}_p$ \citep[see][]{Schwarz05}.
%
%\begin{align*}
%	f_X(x)
%	=
%	{\Gamma(p) \over x B(x) }\,
%	\mathcal{L}^{-1}\left\{
%	t^{1-p}
%	\mathcal{L}^{-1}\left\{
%	{1\over s^{p} A\left(\dfrac{1}{\theta s}\right) }
%	\left({\theta s\over 1+\theta s}\right)^2
%	f_Z\left({\theta s\over 1+ \theta s}\right)
%	\right\}(t)
%	\right\}(x).
%\end{align*}
\end{theorem1}

\begin{example1}
	When $Y$ is a random variable having the generalized beta-prime distribution with shape parameters $\alpha_2, \beta_2>0$ and density
	\begin{align*}
	f_Y(y)
	=
	{\lambda_2^{\alpha_2} y^{\alpha_2-1} (1+\lambda_2 y)^{-(\alpha_2+\beta_2)}\over B(\alpha_2,\beta_2) },
	\quad y>0,
	\end{align*}
and $Z$ has density
%\begin{align*}
%	f_Z(z)
%	=
%K
%	(1-z)^{\alpha_2-1} z^{-(1+\alpha_2)}
%	\, _2F_1\left(\alpha_2+\beta_2,\alpha_1+\alpha_2;\alpha_1+\alpha_2+\beta_1+\beta_2;1-{\lambda_2\over\lambda_1} \left({1\over z}-1\right)\right),
%\end{align*}
\begin{align}\label{pdf-Z}
f_Z(z)
=
K\,
{z^{\alpha_1-1}\over (1-z)^{\alpha_1+1}}
\, _2F_1\left(\alpha_1+\beta_1,\alpha_1+\alpha_2;\alpha_1+\alpha_2+\beta_1+\beta_2;1-{\lambda_1\over\lambda_2} \left({z\over 1-z}\right)\right), 	 \quad 0<z<1,
\end{align}
where
$K=B(\alpha_1+\alpha_2,\beta_1+\beta_2) \lambda_1^{\alpha_1}/[ B(\alpha_1,\beta_1)B(\alpha_2,\beta_2) \lambda_2^{\alpha_1}]
$ and  $\alpha_1, \beta_1>0$.
%
%In what follows we find the distribution of $X$.
%
	Indeed, observe that $f_Y(sx)=\mathbbm{A}(s) \mathbbm{B}(x) \mathbbm{C}(sx)$, with
\begin{align*}
	\mathbbm{A}(s)=s^{\alpha_2-1},
	\quad
	\mathbbm{B}(x)={\lambda_2^{\alpha_2}\over B(\alpha_2 ,\beta_2)}\, x^{\alpha_2-1}
	\quad
	\text{and}
	\quad
	\mathbbm{C}(sx)=(1+\lambda_2 sx)^{-(\alpha_2+\beta_2)}.
\end{align*}		
	
As $x \mathbbm{B}(x)=\lambda_2^{\alpha_2} x^{\alpha_2}/B(\alpha_2 ,\beta_2 )$ and (for $p=\alpha_2+\beta_2$ and $\theta=\lambda_2$)
	\begin{multline*}
	{1\over s^{p} \mathbbm{A}\left(\dfrac{1}{\theta s}\right)}
	\left({\theta s\over 1+\theta s}\right)^2
	f_Z\left({\theta s\over 1+ \theta s}\right)
	\\[0,2cm]
	=
	\dfrac{B(\alpha_1+\alpha_2,\beta_1+\beta_2) \lambda_1^{\alpha_1}\lambda_2^{\alpha_2}}{ B(\alpha_1,\beta_1)B(\alpha_2,\beta_2) } \,
	s^{\alpha_1-\beta_2}
	\, _2F_1\left(\alpha_1+\beta_1,\alpha_1+\alpha_2;\alpha_1+\alpha_2+\beta_1+\beta_2;1-\lambda_1 s\right),
	\end{multline*}
	from Theorem \ref{tehorem-main-2} (with $p=\alpha_2+\beta_2$ and $\theta=\lambda_2$), we readily obtain
	\begin{align}\label{id-1}
	f_X(x)
	=
	\dfrac{B(\alpha_1+\alpha_2,\beta_1+\beta_2) \lambda_1^{\alpha_1}}{ B(\alpha_1,\beta_1) x^{\alpha_2}} \,
	\mathcal{G}_p^{-1}\left\{
s^{\alpha_1-\beta_2}
\, _2F_1\left(\alpha_1+\beta_1,\alpha_1+\alpha_2;\alpha_1+\alpha_2+\beta_1+\beta_2;1-\lambda_1 s\right)
	\right\}(x).
	\end{align}
	By using the known formula \cite[see][p. 233]{Erdelyi1954b}
	\begin{align*}
	\mathcal{G}_\rho
	\left\{
	x^{\nu-1}(a+x)^{-\mu}
	\right\}(y)	
	=
	\dfrac{\Gamma(\nu)\Gamma(\mu-\nu+\rho) }{\Gamma(\mu+\rho) a^\mu}\,
	y^{\nu-\rho}\,
	_2F_1\left(\mu,\nu;\mu+\rho;1-{y\over a}\right), \quad \rho>\nu-\mu,
	\end{align*}
	with $\mu=\alpha_1+\beta_1, \nu=\alpha_1+\alpha_2, a=1/\lambda_1, \rho=p=\alpha_2+\beta_2$ and $y=s$, we write the argument of $\mathcal{G}_\rho^{-1}$ in \eqref{id-1} as 
\begin{multline}\label{id-2}
	s^{\alpha_1-\beta_2}
	\, _2F_1\left(\alpha_1+\beta_1,\alpha_1+\alpha_2;\alpha_1+\alpha_2+\beta_1+\beta_2;1-\lambda_1 s\right)
	\\[0,2cm]
	=
	\dfrac{\Gamma(\alpha_1+\beta_1+\alpha_2+\beta_2) }{\Gamma(\alpha_1+\alpha_2)\Gamma(\beta_1+\beta_2) \lambda_1^{\alpha_1+\beta_1}}\,
	\mathcal{G}_p
	\left\{
	x^{\alpha_1+\alpha_2-1}\left({1\over\lambda_1}+x\right)^{-(\alpha_1+\beta_1)}
	\right\}(s).
\end{multline}
Then, by combining \eqref{id-1} and \eqref{id-2}, we obtain
\begin{align*}
	f_X(x)
=
\dfrac{\lambda_1^{\alpha_1}
	x^{\alpha_1-1}
	\left(1+\lambda_1 x\right)^{-(\alpha_1+\beta_1)}}{ B(\alpha_1,\beta_1)};
\end{align*}
that is, $X$ follows the generalized beta-prime distribution with shape parameters $\alpha_1, \beta_1>0$.	
\end{example1}

\begin{remark1}
	For two independent variables $X$ and $Y$ having generalized beta-prime distributions, the density $f_Z$ equivalent to \eqref{pdf-Z} was determined earlier by \cite{Bekker2009}.
\end{remark1}

By combining the formula (23) of \cite{Schwarz05} with Theorem \ref{tehorem-main-2}, the following result follows.
\begin{corollary}\label{corollary}
	Under the conditions in \eqref{main-id} and \eqref{decomp-pdf-Y}, if $\mathbbm{C}(x)$ is as in \eqref{decomp-pdf-Y-1},
	then the density of $X$ is given by
	\begin{align*}
		f_X(x)
		=
		{\Gamma(p) \over x \mathbbm{B}(x) }\,
		\mathcal{L}^{-1}\left\{
		t^{1-p}
		\mathcal{L}^{-1}\left\{
		{1\over s^{p} \mathbbm{A}\left(\dfrac{1}{\theta s}\right) }
		\left({\theta s\over 1+\theta s}\right)^2
		f_Z\left({\theta s\over 1+ \theta s}\right)
		\right\}(t)
		\right\}(x).
	\end{align*}
%		where $\mathcal{G}_p^{-1}$ is the inverse of the generalized Stieltjes transform $\mathcal{G}_p$ \citep[see, e.g.,][]{Schwarz05}.
\end{corollary}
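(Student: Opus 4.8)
The plan is to convert the representation of Theorem~\ref{tehorem-main-2} into one involving only Laplace transforms, by inserting into it the standard factorization of the generalized Stieltjes transform through the Laplace transform, which is exactly formula~(23) of \cite{Schwarz05}.

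First I would recall that, for an admissible function $\phi$, the generalized Stieltjes transform of order $p$ is
\[
\mathcal{G}_p\{\phi\}(s)=\int_0^\infty \frac{\phi(\xi)}{(s+\xi)^p}\,\mathrm{d}\xi .
\]
Substituting the Gamma-integral identity $(s+\xi)^{-p}=\frac{1}{\Gamma(p)}\int_0^\infty t^{p-1}\mathrm{e}^{-t(s+\xi)}\,\mathrm{d}t$ and interchanging the order of integration (legitimate by Tonelli's theorem, since $\phi=x\mathbbm{B}(x)f_X(x)\geqslant 0$ by Theorem~\ref{tehorem-main-2}), one obtains
\[
\mathcal{G}_p\{\phi\}(s)=\frac{1}{\Gamma(p)}\int_0^\infty t^{p-1}\mathrm{e}^{-ts}\Big(\int_0^\infty \mathrm{e}^{-t\xi}\phi(\xi)\,\mathrm{d}\xi\Big)\mathrm{d}t=\frac{1}{\Gamma(p)}\,\mathcal{L}\big\{t^{p-1}\,\mathcal{L}\{\phi\}(t)\big\}(s),
\]
which is formula~(23) of \cite{Schwarz05}. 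Equivalently $\Gamma(p)\,\mathcal{G}_p\{\phi\}=\mathcal{L}\{t^{p-1}\mathcal{L}\{\phi\}(t)\}$; applying $\mathcal{L}^{-1}$ gives $\mathcal{L}\{\phi\}(t)=\Gamma(p)\,t^{1-p}\,\mathcal{L}^{-1}\{\mathcal{G}_p\{\phi\}\}(t)$, and a second application of $\mathcal{L}^{-1}$ yields the inversion formula
\[
\phi(x)=\Gamma(p)\,\mathcal{L}^{-1}\Big\{t^{1-p}\,\mathcal{L}^{-1}\{\mathcal{G}_p\{\phi\}\}(t)\Big\}(x);
\]
that is, writing $g=\mathcal{G}_p\{\phi\}$,
\[
\mathcal{G}_p^{-1}\{g\}(x)=\Gamma(p)\,\mathcal{L}^{-1}\Big\{t^{1-p}\,\mathcal{L}^{-1}\{g\}(t)\Big\}(x).
\]

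It then remains only to apply this identity with
\[
g(s)=\frac{1}{s^{p}\,\mathbbm{A}\!\left(\dfrac{1}{\theta s}\right)}\left(\frac{\theta s}{1+\theta s}\right)^2 f_Z\!\left(\frac{\theta s}{1+\theta s}\right),
\]
which is precisely the argument of $\mathcal{G}_p^{-1}$ appearing in Theorem~\ref{tehorem-main-2} when $\mathbbm{C}$ is as in \eqref{decomp-pdf-Y-1}; substituting the resulting expression for $\mathcal{G}_p^{-1}\{g\}(x)$ into the conclusion of Theorem~\ref{tehorem-main-2} and moving the constant $\Gamma(p)$ out front gives the stated formula.

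The main difficulty is analytic rather than algebraic: one has to make sure the chain $\mathcal{L}^{-1}\circ(t^{1-p}\cdot)\circ\mathcal{L}^{-1}$ is well-defined on $g$, i.e.\ that $\phi$ is Laplace-transformable, that $t^{p-1}\mathcal{L}\{\phi\}(t)$ is again Laplace-transformable, and that the two inverse Laplace transforms exist and are unique in the class in which Theorem~\ref{tehorem-main-2} recovers $f_X$. These are exactly the hypotheses under which formula~(23) of \cite{Schwarz05} is valid, and since Theorem~\ref{tehorem-main-2} already produces $f_X$ as a bona fide density through $\mathcal{G}_p^{-1}$, the nonnegativity of $\phi$ makes the Tonelli interchange and the successive inversions go through without further work.
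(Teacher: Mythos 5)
Your proof is correct and follows essentially the same route as the paper, which simply cites formula (23) of \cite{Schwarz05} and combines it with Theorem \ref{tehorem-main-2}; the only difference is that you additionally rederive that formula (the factorization $\Gamma(p)\,\mathcal{G}_p\{\phi\}=\mathcal{L}\{t^{p-1}\mathcal{L}\{\phi\}(t)\}$ via the Gamma integral and Tonelli) rather than taking it as given. The algebra and the double inversion are exactly what the paper intends.
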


Corollary \ref{corollary} provides an alternative formula in case the inverse generalized Stieltjes transform of Theorem \ref{tehorem-main-2} is difficult to find.

\thispagestyle{empty}
%
%\begin{align*}
%f_Z(z)
%&=
%\int_0^\infty f_X(x) \exp\left(-\left({1\over z}-1\right)^\beta x^\beta\right) {\rm d}x
%\\[0,2cm]
%&=
%c
%\int_0^\infty
%\exp\left(- \alpha x\right)
%\exp\left(-\left({1\over z}-1\right)^\beta x^\beta\right)
% {\rm d}x
% \\[0,2cm]
% &=
%\alpha \mathcal{L}\left\{\exp\left(-\left({1\over z}-1\right)^\beta x^\beta\right)\right\}(\alpha).
%\end{align*}

%Another important result which can be directly applied when the PDF shape of Y is simple is the following.
%\begin{theorem}
%	
%\end{theorem}
%\begin{proof}
%	content
%\end{proof}

\section{Concluding Remarks} \label{sec:3}
\noindent

In the recent literature concerning bounded models, many papers have assumed a known distribution for $Z$, but usually it is difficult to understand how $X$ and $Y$ were obtained. In this paper, a new technique based on inverse integral transforms approach is suggested for finding the exact distribution of the random variable $X$ that is involved in the independent ratio $Z = X/(X+Y)$. This procedure has been discussed in detail for some cases and illustrated with many known as well as some new (see Examples \ref{weighted Lindley}, \ref{example-GG} and \ref{weighted Lindley distribution}) bounded models.

\paragraph{Acknowledgements}
\noindent

Roberto Vila gratefully acknowledge financial support from CNPq, CAPES and FAP-DF, Brazil.
Marcelo Bourguignon gratefully acknowledges partial financial support of the
Brazilian agency Conselho Nacional de Desenvolvimento Cient\'ifico e Tecnol\'ogico
(CNPq: grant 304140/2021-0).

\paragraph{Disclosure statement}
There are no conflicts of interest to disclose.

%%%%%%%%%%%%%%%%%%%%%%%%%%%%%%%%%%%%%%%%%%%%%%%%%%%%%%%%%%%%%

\normalsize

%\bibliographystyle{apalike}
%\bibliography{bibliography}

\begin{appendices}
	\section{A technical result}
%\section*{A technical result}
\noindent

\begin{proposition}\label{prop-app-1}
	The following identity holds true:
	\begin{align*}
	\mathcal{L}^{-1}\left\{{s^a\over (1+s)^b}\right\}(t)
	=
	{1\over \Gamma(b-a)}\, t^{b-a-1}\, _1F_1(b; b-a; -t), \quad b>a.
	\end{align*}
\end{proposition}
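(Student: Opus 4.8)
The plan is to compute the inverse Laplace transform $\mathcal{L}^{-1}\{s^a/(1+s)^b\}$ by writing $s^a = (s+1-1)^a$ and expanding, then recognizing the resulting series as a confluent hypergeometric function. Concretely, I would first treat the case $0 < a$ and write $s^a/(1+s)^b = (s+1)^{a}(s+1)^{-b} \cdot [1 - 1/(s+1)]^{a} \cdot$ — more carefully, set $u = s+1$, so the transform argument is $(u-1)^a u^{-b}$; but since $a$ need not be a nonnegative integer, a direct binomial expansion of $(u-1)^a$ has infinitely many terms. Instead I would use the generalized binomial series $(u-1)^a = u^a(1-1/u)^a = u^a\sum_{k\ge 0}\binom{a}{k}(-1)^k u^{-k}$, valid for $|u|>1$, giving
\[
\frac{s^a}{(1+s)^b} = \sum_{k=0}^\infty \binom{a}{k}(-1)^k (s+1)^{a-b-k}.
\]

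Next I would apply $\mathcal{L}^{-1}$ term by term using the standard identity \eqref{usual-identity} (with $\alpha=1$, $\beta=1$, $p = b-a+k$), which gives $\mathcal{L}^{-1}\{(s+1)^{-(b-a+k)}\}(t) = t^{b-a+k-1}e^{-t}/\Gamma(b-a+k)$ for $b-a+k>0$ — a condition guaranteed by the hypothesis $b>a$. Summing,
\[
\mathcal{L}^{-1}\left\{\frac{s^a}{(1+s)^b}\right\}(t)
= e^{-t} t^{b-a-1}\sum_{k=0}^\infty \binom{a}{k}(-1)^k \frac{t^k}{\Gamma(b-a+k)}.
\]
Then I would rewrite $\binom{a}{k}(-1)^k = (-a)_k/k! \cdot(-1)^{2k}$... more precisely $\binom{a}{k}(-1)^k = \frac{(-a)(1-a)\cdots(k-1-a)}{k!} = \frac{(-a)_k}{k!}$, and $1/\Gamma(b-a+k) = 1/[\Gamma(b-a)(b-a)_k]$, so the sum becomes $\frac{1}{\Gamma(b-a)}\sum_k \frac{(-a)_k}{(b-a)_k}\frac{t^k}{k!} = \frac{1}{\Gamma(b-a)}\,{}_1F_1(-a;\,b-a;\,t)$. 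Finally, applying Kummer's transformation ${}_1F_1(-a;b-a;t) = e^{t}\,{}_1F_1(b;\,b-a;\,-t)$ converts $e^{-t}\,{}_1F_1(-a;b-a;t)$ into ${}_1F_1(b;b-a;-t)$, yielding exactly the claimed formula.

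The main obstacle is legitimacy of the term-by-term inversion: the expansion of $(u-1)^a$ converges only for $|u|>1$, i.e. $\mathrm{Re}(s)>0$ away from a neighborhood of $s=0$, whereas the Bromwich contour for $\mathcal{L}^{-1}$ can be taken with abscissa arbitrarily large, so convergence on the contour is fine; one must still justify interchanging the infinite sum with the Bromwich integral, which follows from uniform convergence of the series on vertical lines $\mathrm{Re}(s)=c$ with $c$ large together with the decay of $(s+1)^{a-b-k}$. Alternatively — and this is the route I would actually write up to sidestep convergence bookkeeping — I would \emph{verify} the claimed formula by computing the forward Laplace transform of the right-hand side: using the tabulated pair $\mathcal{L}\{t^{c-1}\,{}_1F_1(d;c;-t)\}(s) = \Gamma(c) s^{-c}(1+1/s)^{-d}\cdot$ (equivalently $\Gamma(c)(s+1)^{d-c}s^{c-d}$... ), with $c = b-a$ and $d = b$, one gets $\Gamma(b-a)\cdot\frac{1}{\Gamma(b-a)}\cdot s^{a}(s+1)^{-b}$, matching $s^a/(1+s)^b$; uniqueness of Laplace transforms for continuous functions of exponential order then finishes the proof. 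Either way the only real content is bookkeeping with Pochhammer symbols and one application of Kummer's transformation.
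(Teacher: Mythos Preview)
Your proposal is correct, and your ``alternative'' route --- verify the identity by computing the forward Laplace transform of the right-hand side --- is exactly what the paper does: expand ${}_1F_1(b;b-a;-t)$ as a power series in $t$, apply $\mathcal{L}$ term by term via $\mathcal{L}\{t^p\}(s)=\Gamma(p+1)/s^{p+1}$, and then sum the resulting series $\sum_{n\ge 0}\Gamma(b+n)(-1)^n/(n!\,s^n)=\Gamma(b)\bigl(\tfrac{s}{s+1}\bigr)^b$ to recover $s^a/(1+s)^b$.

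Your primary route (binomial-expand $s^a/(1+s)^b$ in powers of $(s+1)^{-1}$, invert each term with \eqref{usual-identity}, and then apply Kummer's transformation to convert $e^{-t}\,{}_1F_1(-a;b-a;t)$ into ${}_1F_1(b;b-a;-t)$) is a genuinely different decomposition. It is essentially the paper's series manipulation read in reverse, at the cost of one extra ingredient --- Kummer --- which the paper avoids by starting on the $t$-side with the target hypergeometric. What your route buys is a constructive derivation rather than a verification; what the paper's route buys is a cleaner justification of the term-by-term step (Fubini/Tonelli on a positive-kernel integral, with no Bromwich-contour bookkeeping). Your own assessment of the trade-off is accurate, and either argument is fine.
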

\begin{proof}
	Upon using the identity
	\begin{align*}%\label{id-F1}
	&_1F_1(a; c; z)={\Gamma(c)\over \Gamma(a)}\, \sum_{n=0}^{\infty} {\Gamma(a+n)\over \Gamma(c+n)}\, {z^n\over n!},
%	\\[0,2cm]
%	&_1F_1(a; c; -z)=\exp(-z)\, _1F_1(c-a; c; z),
	\end{align*}
	we can write (for  $b>a$)
	\begin{align*}
	{1\over \Gamma(b-a)}\, t^{b-a-1}\, _1F_1(b; b-a; -t)
	=
%	{1\over \Gamma(b-a)}\, t^{b-a-1}\, \exp(-t)\, _1F_1(-a; b-a; t).
	{1\over \Gamma(b)}\, \sum_{n=0}^{\infty} {\Gamma(b+n)\over \Gamma(b-a+n)}\, {(-1)^n\over n!}\,  {t^{n+b-a-1}}.
	\end{align*}
	Then,  by using Fubini/Tonelli theorem,
	\begin{align}
	\mathcal{L}\left\{
		{1\over \Gamma(b-a)}\, t^{b-a-1}\, _1F_1(b; b-a; -t)
		\right\}
	&=
	{1\over \Gamma(b)}\, \sum_{n=0}^{\infty} {\Gamma(b+n)\over \Gamma(b-a+n)}\, {(-1)^n\over n!}\,  \mathcal{L}\left\{t^{n+b-a-1}\right\}(s) \nonumber
	\\[0,2cm]
	&=
	s^{a-b}\,
	{1\over \Gamma(b)}\, \sum_{n=0}^{\infty} {\Gamma(b+n)}\, {(-1)^n\over n!} \,
	{1\over s^{n}}, \label{identity-gamma}
\end{align}
because $\mathcal{L}\{t^p\}(s)=\Gamma(p+1)/s^{p+1}$, $p>-1$. By combining the identity
\begin{align*}
\sum_{n=0}^{\infty} {\Gamma(b+n)}\, {(-1)^n\over n!} \,
{1\over s^{n}}
=
\left({s\over s+1}\right)^b \Gamma(b)
\end{align*}
with \eqref{identity-gamma}, the required result follows.
\end{proof}

\section{Proofs of the theorems}

\subsection*{Proof of Theorem 1}
\noindent

	Using the independence of $X$ and $Y$, from \eqref{main-id} and \eqref{decomp-pdf-Y}, it is simple to verify that the PDF of $Z$ can be expressed as
	%\begin{align*}
	%F_Z(z)
	%=
	%\int_0^\infty f_X(x) S_Y\left(\left({1\over z}-1\right)x\right) {\rm d}x,
	%\end{align*}
	\begin{align*}
		f_Z(z)
		=
		(s+1)^2
		\int_0^\infty x f_X(x) f_Y(sx) {\rm d}x,
		\quad {\mbox{where}}\  s={1\over z}-1.
	\end{align*}
	%Since $f_Y(sx)=A(s) B(x) C(sx)$, the above identity becomes
	%\begin{align}
	%f_Z(z)
	%=
	%A(s)  (s+1)^2
	%\int_0^\infty x B(x) f_X(x)  C(sx) {\rm d}x. \label{id-0}
	%\end{align}
	By using \eqref{decomp-pdf-Y}, the above identity can be equivalently written as
	\begin{align}\label{fZ-density}
		{1\over \mathbbm{A}(s) (s+1)^2}\,
		f_Z\left({1\over s+1}\right)
		=
		\int_0^\infty
		x \mathbbm{B}(x) f_X(x)
		\mathbbm{C}(sx)
		{\rm d}x.
	\end{align}
From \eqref{decomp-pdf-Y-0-6}, the above equation becomes
%	
%	
%	By combining  the decomposition in \eqref{decomp-pdf-Y} with \eqref{decomp-pdf-Y-0-6},
%	from identity \eqref{fZ-density} we have
	\begin{align*}
		{1\over \mathbbm{A}(s) (s+1)^2}\,
		f_Z\left({1\over s+1}\right)
		&=
		\int_0^\infty
		x \mathbbm{B}(x) f_X(x)
		\exp(-\lambda s^\theta x^\theta)
		{\rm d}x.
	\end{align*}
	Making the change of variable $y=x^\theta$, with ${\rm d}x=[y^{(1/\theta)-1}/\theta] {\rm d}y$, the above improper integral is
	\begin{align*}
		&=
		{1\over\theta}
		\int_0^\infty
		y^{(2/\theta)-1} \mathbbm{B}(y^{1/\theta}) f_X(y^{1/\theta})
		\exp(-\lambda s^\theta y)
		{\rm d}y \nonumber
		\\[0,2cm]
		&=
		{1\over\theta}\,
		\mathcal{L}\left\{
		y^{(2/\theta)-1}\mathbbm{B}(y^{1/\theta}) f_X(y^{1/\theta})
		\right\}(\lambda s^\theta)
		\\[0,2cm]
		&=
		{1\over\lambda\theta}\,
		\mathcal{L}\left\{\left({y\over\lambda}\right)^{(2/\theta)-1} \mathbbm{B}\left(\left({y\over\lambda}\right)^{1/\theta}\right)
		f_X\left(\left({y\over\lambda}\right)^{1/\theta}\right)
		\right\}(s^\theta),
	\end{align*}	
	where, in the last line, we have used the scale change property of $\mathcal{L}$. Hence,
	\begin{align*}
		{\lambda\theta\over \mathbbm{A}(s) (s+1)^2}\,
		f_Z\left({1\over s+1}\right)
		=
		\mathcal{L}\left\{\left({y\over\lambda}\right)^{(2/\theta)-1} \mathbbm{B}\left(\left({y\over\lambda}\right)^{1/\theta}\right)
		f_X\left(\left({y\over\lambda}\right)^{1/\theta}\right) \right\}(s^\theta).
	\end{align*}
	Now, by applying the inverse Laplace transform to both sides of the above equation, we obtain
	\begin{align*}
		\left({y\over\lambda}\right)^{(2/\theta)-1} \mathbbm{B}\left(\left({y\over\lambda}\right)^{1/\theta}\right)
		f_X\left(\left({y\over\lambda}\right)^{1/\theta}\right)
		=
		\mathcal{L}^{-1}\left\{	{\lambda\theta\over \mathbbm{A}(s^{1/\theta}) (s^{1/\theta}+1)^2}\,
		f_Z\left({1\over s^{1/\theta}+1}\right)\right\}(y).
	\end{align*}
	Setting $x=({y/\lambda})^{1/\theta}$ and making simple algebraic manipulations, the proof follows.

\subsection*{Proof of Theorem 2}
\noindent

By combining  the decomposition in \eqref{decomp-pdf-Y} with \eqref{decomp-pdf-Y-0-7}, and using the
identity \eqref{fZ-density}, we obtain	
\begin{align*}
{1\over \mathbbm{A}(s) (s+1)^2}\,
f_Z\left({1\over s+1}\right)
&=
\int_0^\infty
x \mathbbm{B}(x) f_X(x)  (p+qsx)
\exp(-\lambda s x)
{\rm d}x
\\[0,2cm]
&=
p
\mathcal{L}\left\{
x \mathbbm{B}(x) f_X(x)
\right\}(\lambda s)
+
qs
\mathcal{L}\left\{
x^2 \mathbbm{B}(x) f_X(x)
\right\}(\lambda s).
%\\[0,2cm]
%&=
%{p\over\lambda}\,
%\mathcal{L}\left\{
%\dfrac{x}{\lambda} \, B\left(\dfrac{x}{\lambda}\right) f_X\left(\dfrac{x}{\lambda}\right)
%\right\}(s)
%+
%{q\over\lambda}\,
%s
%\mathcal{L}\left\{
%\left(\dfrac{x}{\lambda}\right)^2 B\left(\dfrac{x}{\lambda}\right) f_X\left(\dfrac{x}{\lambda}\right)
%\right\}(s),
\end{align*}
%where in the last equality we used the scale change property of $\mathcal{L}$. That is,
%\begin{align*}
%{\lambda\over A(s) (s+1)^2}\,
%f_Z\left({1\over s+1}\right)
%=
%{p}
%\mathcal{L}\left\{
%\dfrac{x}{\lambda} \, B\left(\dfrac{x}{\lambda}\right) f_X\left(\dfrac{x}{\lambda}\right)
%\right\}(s)
%+
%{q}
%s
%\mathcal{L}\left\{
%\left(\dfrac{x}{\lambda}\right)^2 B\left(\dfrac{x}{\lambda}\right) f_X\left(\dfrac{x}{\lambda}\right)
%\right\}(s)
%\end{align*}
Now, by applying the inverse Laplace transform on both sides of the above equality and using the well-known property, that
\begin{align}\label{property-inv-Laplace}
\mathcal{L}^{-1}\{s\mathcal{L}\{g(x)\}(s)\}(x)= g'(x),
\end{align}
we obtain
\begin{multline*}
\mathcal{L}^{-1}\left\{
{\lambda^2\over \mathbbm{A}\left(\dfrac{s}{\lambda}\right) (s+\lambda)^2}\,
f_Z\left({\lambda\over s+\lambda}\right)
\right\}(x)
=
{p}
x \mathbbm{B}(x) f_X(x)
+
{q\over\lambda}\,
[x^2 \mathbbm{B}(x) f_X(x)]'
\\[0,2cm]
=
\left[
\left(p+{2q\over\lambda}\right)
x \mathbbm{B}(x)
+
{q\over\lambda}\,x^2 \mathbbm{B}'(x)
\right]
f_X(x)
+
{q\over\lambda}\,
x^2 \mathbbm{B}(x) f_X'(x);
\end{multline*}
equivalently,
\begin{align}\label{eq-1o-order}
	f_X'(x)+P(x)f_X(x)=Q(x),
\end{align}
where
\begin{align*}
P(x)
=	
\displaystyle
	\left({\lambda p\over q}+2\right) {1\over x}
	+
	{\mathbbm{B}'(x)\over \mathbbm{B}(x)},
\quad
Q(x)=	
{\lambda^3\over qx^2 \mathbbm{B}(x)}\,
\mathcal{L}^{-1}\left\{
{1\over \mathbbm{A}\left(\dfrac{s}{\lambda}\right) (s+\lambda)^2}\,
f_Z\left({\lambda\over s+\lambda}\right)
\right\}(x).
\end{align*}
Observe that Eq. \eqref{eq-1o-order} is a first-order non-homogeneous linear differential equation of type $y'+p(t)y=q(t)$, $t>0$, whose solution (by method of integrating factor), well-known in the field of differential equations, can be given as
$y(t)=\exp(-\int p(t){\rm d}t) [\int_{-\infty}^t \exp(\int p(s){\rm d}s) q(s){\rm d}s+{\rm const}]$, with const$=[\exp(\int p(t){\rm d}t) y(t)]\vert_{t=0^{+}}$. Hence, for $x>0$, we have
\begin{align*}
	f_X(x)
	&=
	\exp\left(-\int P(x){\rm d}x\right)
	\left[
	\int_0^x \exp\left(\int P(\xi){\rm d}\xi\right) Q(\xi){\rm d}\xi
	+
	{\rm const}
	\right]
	\\[0,2cm]
	&=
	{1\over x^{(\lambda p/ q)+2} \mathbbm{B}(x)}
	\left[
	\int_0^x
	\xi^{(\lambda p/ q)+2} \mathbbm{B}(\xi) Q(\xi){\rm d}\xi
	+
	{\rm const}
	\right]
		\\[0,2cm]
	&=
		{1\over x^{(\lambda p/ q)+2} \mathbbm{B}(x)}
	\left[
	{\lambda^3\over q}
	\int_0^x
	\xi^{\lambda p/ q}
	\mathcal{L}^{-1}\left\{
	{1\over \mathbbm{A}\left(\dfrac{s}{\lambda}\right) (s+\lambda)^2}\,
	f_Z\left({\lambda\over s+\lambda}\right)
	\right\}(\xi)
	{\rm d}\xi
	+
	{\rm const}
	\right],
\end{align*}
where const$=[x^{(\lambda p/ q)+2} \mathbbm{B}(x) f_X(x)]\vert_{x=0^+}$.
%
%
%\begin{align*}
%	\exp\left(\int P(x){\rm d}x\right)
%	=
%	\exp\left(	\left({\lambda p\over q}+2\right) \log(x)
%	+
%	\log[B(x)] \right)
%	=
%	x^{(\lambda p/ q)+2} B(x)
%\end{align*}
From the above identity, the proof follows.

\subsection*{Proof of Theorem 3}
\noindent

\begin{proof}
From \eqref{decomp-pdf-Y-1} and \eqref{decomp-pdf-Y}, we have
\begin{align}\label{decomp-pdf-Y-2}
f_Y(sx)
=
\mathbbm{A}(s) \mathbbm{B}(x) \mathbbm{C}(sx)
=
{1\over (\theta s)^p}\, \mathbbm{A}(s) \mathbbm{B}(x) \left({1\over  \theta s}+x\right)^{-p}, \quad x, s>0.
\end{align}
Now, by combining the identities in \eqref{fZ-density} and \eqref{decomp-pdf-Y-2}, we obtain
\begin{align*}
{ (\theta s)^p\over \mathbbm{A}(s) (s+1)^2}\,
f_Z\left({1\over s+1}\right)
&=
\int_0^\infty
x \mathbbm{B}(x) f_X(x)
\left({1\over  \theta s}+x\right)^{-p}
{\rm d}x
\\[0,2cm]
&=
\mathcal{G}_p\{x \mathbbm{B}(x) f_X(x) \}\left({1\over \theta s}\right).
\end{align*}
Applying the inverse generalized Stieltjes transform on both sides of the above equality, the proof of the required identity readily follows.
%
%
%\begin{align*}%\label{id-0}
%f_Z(z)
%=
%{1\over (\theta s)^p}\, A(s)  (s+1)^2
%\int_0^\infty x B(x) f_X(x)  \left({1\over  \theta s}+x\right)^{-p} {\rm d}x,
%\ \ {\mbox{where}}\ \ s={1\over z}-1.
%\end{align*}
%Equivalently,
%\begin{align*}
%{(\theta s)^p\over A(s) (s+1)^2}\,
%f_Z\left({1\over s+1}\right)
%&=
%\int_0^\infty x B(x) f_X(x)   \left({1\over  \theta s}+x\right)^{-p} {\rm d}x
%\\[0,2cm]
%&=
%\mathcal{G}_p\{x B(x) f_X(x) \}\left({1\over \theta s}\right),
%\end{align*}
%where $\mathcal{G}_p$ is the generalized Stieltjes transform \citep{Schwarz05}. Using the formula (23) of \cite{Schwarz05}, the above expression is
%\begin{align*}
%	={1\over \Gamma(p)} \,
%	\mathcal{L}\big\{t^{p-1} \mathcal{L}\{x B(x) f_X(x)\}(t)\big\}\left({1\over \theta s}\right).
%\end{align*}
%Therefore,
%\begin{align*}
%{\Gamma(p)(\theta s)^p\over A(s) (s+1)^2}\,
%f_Z\left({1\over s+1}\right)
%=
%\mathcal{L}\big\{t^{p-1} \mathcal{L}\{x B(x) f_X(x)\}(t)\big\}\left({1\over \theta s}\right).
%\end{align*}
%Taking the inverse Laplace transform on both sides of the above equality, we obtain
%\begin{align*}
%x B(x) f_X(x)
%=
%\Gamma(p)
%	\mathcal{L}^{-1}\left\{
%t^{1-p}
%\mathcal{L}^{-1}\left\{
%{1\over s^{p} A\left(\dfrac{1}{\theta s}\right) }
%\left({\theta s\over 1+\theta s}\right)^2
%f_Z\left({\theta s\over 1+ \theta s}\right)
%\right\}(t)
%\right\}(x).
%\end{align*}
%This completes the proof.
\end{proof}

\end{appendices}

\end{document}